\newtheorem{theo}{Theorem}
\newtheorem{prop}[theo]{Proposition}
\newtheorem{coro}[theo]{Corollary}
\newcommand{\kk}[1]{\mathbf{#1}}
\def\CC{\mathbf{C}}
\def\<{\langle}
\def\>{\rangle}
\def\fs{b}
\def\ton{\alpha}
\def\ttw{\beta}
\def\tth{\gamma}
\def\tfo{\delta}
\def\qandq{\quad\text{and}\quad}
\def\ov#1{\overline{#1}}
\DeclareMathOperator{\GL}{GL}
\def\hsc{holomorphic sectional curvature}
\def\bl#1{\widehat{#1}}
\def\blX{\bl{X}}
\author{Gunnar \TH\'or Magn\'usson}
\address{Hafnarfj\"or\dh{}ur, Iceland}
\email{gunnar@magnusson.io}
\date{\today}
\title{On the curvature of the blowup of a point}
\begin{document}

\begin{abstract}
We consider the blowup of a point of a compact K\"ahler manifold and a metric
of the form $\mu^*h + t \fs$ on it, where $h$ is a K\"ahler metric on the
original manifold and $\fs$ is Hermitian form that looks like the
Fubini--Study metric near the exceptional divisor.
We calculate the curvature tensor of this metric on the exceptional divisor
and show that its holomorphic sectional curvature is negative in some
directions for all small enough $t$, which torpedos a natural approach to
showing that blowups of manifolds of positive holomorphic sectional curvature
have positive curvature.
\end{abstract}

\maketitle

\section*{Introduction}

Let $X$ be a compact K\"ahler manifold of dimension $n$.
Let $h$ be a K\"ahler metric on $X$.
We write $D$ for its Chern connection and
$R(\alpha,\ov\beta,\gamma,\ov\delta) = h(\frac i2
D^2_{\smash{\alpha,\ov\beta}}\gamma, \ov\delta)$ for its curvature tensor. The
holomorphic sectional curvature of $h$ is
$$
H(\xi)
= R(\xi, \ov\xi, \xi, \ov\xi)/|\xi|^4,
$$
where $\xi$ is a nonzero tangent field.
We say that $h$ has positive holomorphic sectional curvature if $H > 0$ for all
tangent fields.
Examples of manifolds that carry such metrics are the complex projective space
with its Fubini--Study metric, the standard metric on a Grassmannian manifold,
and projectivized bundles over bases of positive holomorphic sectional curvature
~\cite{alvarez2018projectivized}.
Tsukamoto~\cite{tsukamoto1957kahlerian} showed that manifolds that admit such
metrics are simply connected, and recently Xiaokui Yang~\cite{yang2017rc}
proved that such manifolds are projective and rationally connected, answering a
question of Yau~\cite[Problem~67]{yau1993open}.

A related question of Yau (again Problem~67) is whether the blowup of a compact
K\"ahler manifold of positive \hsc{} along a smooth submanifold again has
positive \hsc.
This is open; even for complex surfaces it is not known whether del~Pezzo
surfaces admit positively curved metrics (aside from the del~Pezzo surfaces of
degrees 8 and 9, where the nontrivial one admits such a metric because it's
also a Hirzebruch surface and thus a projectivized bundle). In this note we
explain that the brute force approach to this problem does not work.

Prior art in this direction has focused on showing that projective bundles
over manifolds with positive \hsc{} are also positive, along with more
general fibrations with positively curved fibers and base.
The common thread in the papers of
Hitchin~\cite{hitchin1975curvature},
Sung~\cite{sung1997kahler},
\'Alvarez~\cite{alvarez2016positive},
\'Alvarez,
Heier and Zheng~\cite{alvarez2018projectivized} or Chaturvedi and
Heier~\cite{chaturvedi2020hermitian}
is to consider metrics of the form $\mu^* h + t \fs$, where $h$ is a metric
on the base and $\fs$ a fiberwise metric on the fibers and show they have
positive curvature for small enough $t$.
In this note we try this approach for the blowup of a point.
We calculate the full curvature tensor of $h_t$ on a point on the exceptional
divisor and show that the metric
does not have positive \hsc{} there, no matter what metric $h$ we started with.
This of course doesn't mean the blowup doesn't have positive \hsc{}, only that
the most straightforward method of constructing a metric on it fails to show that.

\section{Curvature}

Let $X$ be a compact K\"ahler manifold of dimension $\dim_{\CC} X = n$
and let $h$ be a K\"ahler metric on it.

\begin{prop}
\label{prop:fs}
Let $\mu : \bl X \to X$ be the blowup of $X$ at a point $p$.
There exists a closed $(1,1)$-form $\beta$ on $\blX$ whose restriction to
the exceptional divisor is the Fubini--Study metric.
\end{prop}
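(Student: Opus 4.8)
The plan is to realize $\beta$ as the curvature of a Hermitian metric on the line bundle $\cc O_{\blX}(-E)$. Since the exceptional divisor is $E=\kk P(T_pX)\cong\kk P^{n-1}$ with normal bundle $\cc O_E(-1)$, one has $\cc O_{\blX}(-E)|_E\cong\cc O_E(1)$, and the Fubini--Study form is the Chern curvature of the standard metric on $\cc O_E(1)$. As curvature commutes with restriction to a submanifold, any metric on $\cc O_{\blX}(-E)$ restricting to the standard one on $\cc O_E(1)$ has curvature restricting to the Fubini--Study metric on $E$. I would make this explicit through a local potential rather than patch metrics abstractly.

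Concretely, fix holomorphic coordinates $z=(z_1,\dots,z_n)$ centered at $p$ on a ball $B$, and set $\sigma=\log(|z_1|^2+\dots+|z_n|^2)$ on $B\setminus\{p\}$. In the blowup chart where $z_1=w_1$ and $z_j=w_1w_j$ for $j\ge2$, with $E=\{w_1=0\}$ and $(w_2,\dots,w_n)$ affine coordinates on $\kk P^{n-1}$, one computes
$$
\mu^*\sigma=\log|w_1|^2+\log\Bigl(1+\sum_{j\ge2}|w_j|^2\Bigr).
$$
Because $\log|w_1|^2$ is pluriharmonic where $w_1\neq0$, the form $\frac i2\p\bp\,\mu^*\sigma$ equals $\frac i2\p\bp\log\bigl(1+\sum_{j\ge2}|w_j|^2\bigr)$ on the chart, which extends smoothly across $E$ and restricts on $\{w_1=0\}$ to the Fubini--Study metric of $\kk P^{n-1}$ in this affine chart. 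Carrying this out in each of the standard charts of the blowup shows that $\frac i2\p\bp\,\mu^*\sigma$, a priori defined only on $\blX\setminus E$, extends to a smooth closed real $(1,1)$-form near $E$ whose restriction is Fubini--Study.

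To globalize, choose a cutoff $\rho$ with $\rho\equiv1$ on a smaller ball $B'\Subset B$ and $\operatorname{supp}\rho\subset B$, and let $\beta$ be the form that equals $\mu^*\bigl(\frac i2\p\bp(\rho\sigma)\bigr)$ on $\blX\setminus E$ and $\frac i2\p\bp\,\mu^*\sigma$ near $E$. These agree on the overlap $\mu^{-1}(B')\setminus E$, where $\rho\equiv1$, so $\beta$ is a well-defined smooth form on all of $\blX$; it vanishes outside $\mu^{-1}(B)$. It is closed because it is locally $\frac i2\p\bp$ of a function, real of type $(1,1)$ because $\sigma$ is real, and by construction $\beta|_E$ is the Fubini--Study metric. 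The one step that needs genuine care is the chart computation showing that $\frac i2\p\bp\,\mu^*\sigma$ extends across $E$ with the correct restriction; once the singular term $\log|w_1|^2$ is seen to drop out, the gluing and closedness are formal.
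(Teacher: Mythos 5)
Your proof is correct, and it rests on the same classical idea as the paper's (the cutoff-times-log-potential construction that goes back to Kodaira's embedding theorem), but the execution is genuinely different in where the potential lives. The paper works on the ambient product $V \times \kk P(V)$: its potential is $p_1^*\psi \cdot \log|p_2^*v|^2$, with the cutoff a function of the first factor and the logarithm a function of the second, and the desired form is the restriction to $\bl V$ of $\frac i2\p\bp$ of this, extended by zero. You instead work intrinsically on $\blX$ with the potential $\mu^*(\rho\log|z|^2)$ pulled back from $X$ itself; this forces you to supply the one step the paper's route does not need, namely that $\frac i2\p\bp\,\mu^*\log|z|^2$ extends smoothly across $E$ because the singular term $\log|w_1|^2$ is pluriharmonic off $E$. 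What you buy in exchange is that your potential is an honest function on $\blX\setminus E$, so well-definedness is automatic; in the paper's formulation the expression $\log|p_2^*v|^2$ depends on a choice of representative $w$ of a point $[w]\in\kk P(V)$, and since $\psi$ is not pluriharmonic this ambiguity is not killed by $\p\bp$ on the annulus where $d\psi\neq 0$, so the form on $V\times\kk P(V)$ needs interpretation there; restricting to the blowup with the tautological choice $w=v$ turns the paper's potential into exactly your $\mu^*(\psi\log|v|^2)$. In that sense your argument is the careful intrinsic rendering of the paper's: the two forms coincide near $E$ (both equal the pullback of the Fubini--Study form under the projection of the local blowup to $\kk P(V)$) and both vanish outside the coordinate ball. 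Your opening reduction via $\cc O_{\blX}(-E)|_E \cong \cc O_E(1)$ is a nice conceptual frame but is never actually used; the explicit chart computation is what carries the proof, and it is complete.
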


\begin{proof}
This is proved for blowups of smooth submanifolds in
Voisin's textbook~\cite{voisin2002theorie}.
We can simplify that proof a little since we're only blowing up points.
In that case the statement goes back at least to Kodaira's proof of the
embedding theorem.

Locally around $p$, which may assume is the origin in a complex vector space
$V$, the blowup is
$$
\bl V
= \{ (v,[w]) \in V \times \kk P(V) \mid v \in \CC w \}
$$
and $\mu$ is the projection onto the first factor.
Pick an inner product on $V$ and let $B(r) \subset V$ be a ball of radius $r$
centered at $0$.
We pick $r$ so that $B(r)$ fits into the coordinate chart that implicitly lurks
in the background.
Let $\psi$ be a bump function supported on that chart that is identically $1$
on $B(r)$.
The $(1,1)$-form $\frac i2 \partial\bar\partial \log |v|^2$ on $V \setminus
\{0\}$ descends to $\kk P(V)$ and defines the Fubini--Study metric.
If $p_j : V \times V \setminus \{0\} \to V$ for $j = 1,2$ are the projections
onto the first and second factors then
$\frac i2 \partial \bar\partial (p_1^*\psi \log |p_2^*v|^2)$
defines a closed $(1,1)$-form on $V \times \kk P(V)$ that restricts to the
pullback of the Fubini--Study metric by $p_2$ on $B(r) \times \kk P(V)$.
It also extends to the rest of $X$ by zero.
Its restriction to $\bl V$ is the form we want.
\end{proof}

Let $p \in X$ be a point and blow it up to obtain $\mu : \bl X \to X$.
Let $\fs$ be the Hermitian form associated to the $(1,1)$-form $\beta$ on $\bl X$
we constructed in Proposition~\ref{prop:fs}.
Then $h_t = \mu^*h + t \fs$ is a K\"ahler metric on $\bl X$ for all $t$ small
enough.
We're going to calculate its curvature tensor at a point on the exceptional
divisor.

Recall that locally around $p$ the blowup is
$$
\bl V
= \{ (v,[w]) \in V \times \kk P(V) \mid v \in \CC w \}.
$$
If $f \in \GL V$ then $f$ acts on the blowup by $f(v, [w]) = (f(v), [f(w)])$.
This is an isomorphism that maps the exceptional divisor to itself, and we can
map any point on the divisor to any other point on it.
Let $(0, [w])$ be a point on $E$.
Let's choose normal coordinates $(x_1,\ldots,x_n)$ centered at $p$.
There exists $f \in U(n)$ so that $f(w) = (0 \ldots, 0, 1)$, and the
coordinates obtained by applying $f$ to the old ones are still centered at $p$
and are normal there because $f \in U(n)$.
Picking the chart $\{(y_1, \ldots, y_n) \in \CC^n \mid y_n \not= 0 \}$ for
$\kk P(V)$
we realize the blowup as
$$
\bl X
= \{ (x,y) \in \CC^n \times \CC^{n-1}
\mid x_j y_k = x_k y_j \text{ for $j,k = 1,\ldots,n$, where $y_n = 1$}  \}.
$$
In these coordinates the point we want to calculate the \hsc{} at is $(0,0)$
and we have $D_{h,\xi} \partial / \partial x_j = 0$ at $0$ for $j = 1, \ldots, n$.

We also note that close to the exceptional divisor, $h_t = \mu^* h + t \fs$ is
just the restriction of the product metric $p_1^* h \oplus t p_2^* \fs$ on
$V \times \kk P(V)$ to $\bl X$, where $p_j$ are the projections onto the
factors and $\fs$ is the Fubini--Study metric.

The only equations that give us any information at $(0,0)$ are
$$
x_j - y_j x_n = 0, \quad j = 1, \ldots, n-1.
$$
Their differentials are
$$
dx_j - y_j dx_n - x_n dy_j = 0, \quad j=1,\ldots,n-1
$$
and so the tangent fields
$$
\xi_j = x_n e_j + f_j,
\quad j=1,\ldots,n-1,
\qandq
\xi_n = \sum_{j=1}^{n-1} y_j e_j + e_n
$$
are a basis for
the intersection of all the kernels of the differentials, that is, of $T_{\blX}$.
Here $e_j$ is the tangent field corresponding to the coordinate $x_j$
and $f_k$ the one corresponding to $y_k$.
Note that the orthogonal complement of $T_{\smash{\blX}}$ at $(0,0)$ is spanned by
$(e_1, \ldots, e_{n-1})$.
Any holomorphic tangent field $\xi$ on $\blX$ near $(0,0)$ can be written as
$$
\xi = \sum_{j=1}^n \ton_j \xi_j
= \sum_{j=1}^{n-1} (\ton_j x_n + \ton_n y_j)  e_j
+ \sum_{j=1}^{n-1} \ton_j f_j
+ \ton_n e_n,
$$
where the $\ton_j$ are holomorphic functions.

While writing out the curvature tensor it will be useful to have the
semipositive Hermitian form $\tau(\ton, \bar \ttw) = \<\ton, \bar \ttw\> -
\ton_n \bar \ttw_n$ at hand. This is the pullback of the standard inner product
by the projection onto the subspace spanned by the first $n-1$ basis elements.

\begin{prop}
Let
$\alpha = \sum_j \ton_j \xi_j$,
$\beta = \sum_k \ttw_k \xi_k$,
$\gamma = \sum_l \tth_l \xi_l$
and $\delta = \sum_m \tfo_m \xi_m$
be holomorphic tangent fields close to $(0,0)$.
The curvature of $h_t$ at the origin is
$$
\displaylines{
R_{h_t}(\alpha, \ov\beta, \gamma, \ov\delta)
= H_h(e_n) \ton_n \bar \ttw_n \tth_n \bar \tfo_n
+ t \bigl(
\tau(\ton, \bar \ttw) \tau(\tth, \bar \tfo) + \tau(\ton, \bar \tfo) \tau(\tth, \bar \ttw)
\bigr)
\hfill\cr\hfill{}
- \ton_n \bar \ttw_n \tau(\tth, \bar \tfo)
- \ton_n \bar \tfo_n \tau(\tth, \bar \ttw)
- \tth_n \bar \ttw_n \tau(\ton, \bar \tfo)
- \tth_n \bar \tfo_n \tau(\ton, \bar \ttw).
}
$$
\end{prop}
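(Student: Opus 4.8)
The plan is to use the observation recorded just before the statement: near the exceptional divisor $h_t$ is the restriction to $\blX \subset V \times \kk P(V)$ of the product metric $p_1^* h \oplus t\, p_2^* \fs$. Since $\blX$ is a complex submanifold of a K\"ahler manifold, its Chern curvature is governed by the Gauss equation, which for the metric induced on a holomorphic submanifold reads
$$
R_{h_t}(\alpha, \ov\beta, \gamma, \ov\delta)
= R_{\mathrm{amb}}(\alpha, \ov\beta, \gamma, \ov\delta)
- \<\sigma(\alpha,\gamma), \ov{\sigma(\beta,\delta)}\>,
$$
where $R_{\mathrm{amb}}$ is the curvature of the product metric and $\sigma(\alpha,\gamma)$, the second fundamental form, is the component of the ambient connection $\nabla_\alpha \gamma$ normal to $\blX$. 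I expect the first summand to yield the top line of the claimed formula and the second summand to yield the four terms on the bottom line.

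First I would compute $R_{\mathrm{amb}}$. Because the metric is a product, its curvature splits along $T_V \oplus T_{\kk P(V)}$ with no cross terms, so I can treat the two factors separately. At $(0,0)$ the only $V$-component among the frame vectors is that of $\xi_n|_0 = e_n$, so the $V$-component of $\alpha$ is $\ton_n e_n$; since the chart is normal for $h$ at $p$, the $V$-factor contributes $R_h(e_n,\ov e_n,e_n,\ov e_n)\,\ton_n\bar\ttw_n\tth_n\bar\tfo_n = H_h(e_n)\,\ton_n\bar\ttw_n\tth_n\bar\tfo_n$. For the projective factor I would use that $t\,\fs$ is $t$ times a metric of constant holomorphic sectional curvature, whose curvature tensor at the centre of the chart is $\<u,\ov v\>\<s,\ov w\> + \<u,\ov w\>\<s,\ov v\>$; since the $\kk P(V)$-component of $\alpha$ at $(0,0)$ is $\sum_{j<n}\ton_j f_j$ and $\<\sum_{j<n}\ton_j f_j, \ov{\sum_{k<n}\ttw_k f_k}\> = \tau(\ton,\bar\ttw)$, this produces exactly $t\bigl(\tau(\ton,\bar\ttw)\tau(\tth,\bar\tfo) + \tau(\ton,\bar\tfo)\tau(\tth,\bar\ttw)\bigr)$.

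Next I would compute $\sigma$. The normal space at $(0,0)$ is $\Span(e_1,\ldots,e_{n-1})$, so $\sigma(\xi_i,\xi_j)$ is the part of $\nabla_{\xi_i}\xi_j$ along these vectors. Differentiating with the product connection, and using that the chart is normal for $h$ and centred for $\fs$ so that all Christoffel symbols vanish at $(0,0)$, the only surviving contributions come from differentiating the coefficient functions $x_n$ and $y_k$ appearing in the frame. A direct check gives $\sigma(\xi_n,\xi_j) = \sigma(\xi_j,\xi_n) = e_j$ for $j<n$ while all other $\sigma(\xi_i,\xi_j)$ vanish, hence
$$
\sigma(\alpha,\gamma) = \sum_{j<n} (\ton_j\tth_n + \ton_n\tth_j)\,e_j.
$$
Since the $e_j$ are orthonormal for $h$, expanding $\<\sigma(\alpha,\gamma),\ov{\sigma(\beta,\delta)}\>$ and collecting the sums $\sum_{j<n}\ton_j\bar\ttw_j = \tau(\ton,\bar\ttw)$ together with their analogues yields precisely the four negative terms in the statement.

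The main obstacle is the second fundamental form. It is tempting to conclude $\sigma(\xi_n,\xi_j)=0$ because $\xi_n|_0 = e_n$ and $\xi_j|_0 = f_j$ point in ``different factors''; the point is that $\sigma$ is tensorial but must be evaluated on vector fields \emph{tangent to} $\blX$, and the tangential extensions $\xi_n = \sum_{k<n} y_k e_k + e_n$ and $\xi_j = x_n e_j + f_j$ carry coefficients whose derivatives do not vanish, producing the normal vector $e_j$. Obtaining this term correctly, together with fixing the sign in the Gauss equation and the normalization of the Fubini--Study curvature coming from the construction so that the constants come out to exactly $1$ and $t$, is the only delicate part; the remainder is bookkeeping.
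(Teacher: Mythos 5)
Your proposal is correct and follows essentially the same route as the paper: the Gauss equation for the holomorphic submanifold $\blX \subset V \times \kk P(V)$ endowed with the product metric $p_1^*h \oplus t\,p_2^*\fs$, the splitting of the ambient curvature into the $h$-factor term $H_h(e_n)\ton_n\bar\ttw_n\tth_n\bar\tfo_n$ and the Fubini--Study term $t(\tau\tau+\tau\tau)$, and the second fundamental form $\sigma(\alpha,\gamma)=\sum_{j<n}(\ton_n\tth_j+\tth_n\ton_j)e_j$ whose pairing produces the four negative terms. Your observation that the frame fields must be extended \emph{tangentially} to $\blX$, so that the derivatives of the coefficients $x_n$ and $y_k$ generate the normal components, is exactly the mechanism behind the paper's computation of $\pi_N(D_\alpha\xi_j)$ and $\pi_N(D_\alpha\xi_n)$.
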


\begin{proof}
Recall that the curvature of $h_t$ is
$$
R_{h_t}(\alpha, \ov\beta, \gamma, \ov\delta)
= p_1^* R_h(\alpha, \ov\beta, \gamma, \ov\delta)
+ t p_2^* R_b(\alpha, \ov\beta, \gamma, \ov\delta)
- \<\sigma(\alpha, \gamma), \ov{\sigma(\beta, \delta)}\>,
$$
where $\sigma(\alpha, \beta) = \pi_N(D_{h \oplus t \fs,\alpha} \beta)$ is
the second fundamental form, and the inner product is on the orthogonal complement
of the tangent bundle.

First,
$$
p_1^*R_h(\alpha, \ov\beta, \gamma, \ov\delta)
= H_h(e_n) \ton_n \bar \ttw_n \tth_n \bar \tfo_n
$$
at the origin. Second,
\begin{align*}
p_2^*R_b(\alpha, \ov\beta, \gamma, \ov\delta)
&= \< p_2(\alpha), \ov{p_2(\beta)} \>_b
\< p_2(\gamma), \ov{p_2(\delta)} \>_b
+ \< p_2(\alpha), \ov{p_2(\delta)} \>_b
\< p_2(\gamma), \ov{p_2(\beta)} \>_b
\\
&= \tau(\ton, \bar \ttw) \tau(\tth, \bar \tfo) + \tau(\ton, \bar \tfo) \tau(\tth, \bar \ttw)
\end{align*}
at the origin
because the Fubini--Study metric has constant \hsc{} 2
and $\tau$ is just the inner product on $\CC^{n-1}$ extended to $\CC^n$.

Let's now write $\pi_N$ for the projection onto the orthogonal complement of
$T_{\smash{\blX}}$ in $T_{\smash{\CC^n \times \CC^{n-1}|\blX}}$.
At the origin it is just the projection onto the first $n-1$ coordinates.
Third, then,
$$
\pi_N(D_{h_t,\alpha} \gamma)
= \pi_N\biggl(
\sum_{j=1}^{n} \tfo_{\alpha} \tth_j \, \xi_j
+ \sum_{j=1}^n \tth_j D_{\alpha} \xi_j
\biggr).
$$
Note that $\pi_N(\xi_j) = 0$ for $j=1,\ldots,n$ so the first term from that sum
is zero. The second term is
$$
\pi_N(D_{\alpha} \xi_j)
= \pi_N(D_{\alpha}(x_n e_j + f_j))
= \ton_n e_j + x_n D_{\alpha} e_j
= \ton_n e_j
$$
for $j=1,\ldots,n-1$ at the origin because $D_{\alpha} e_j = 0$ there.
We also have
$$
\pi_N(D_{\alpha} \xi_n)
= \pi_N\biggl(
\sum_{k=1}^{n-1} \tfo_{\alpha} y_k \, e_k + y_k D_{\alpha} e_k
+ D_{\alpha} e_n
\biggr)
= \sum_{k=1}^{n-1} \ton_k e_k
$$
at the origin.
Together we get
$$
\sum_{j=1}^n \tth_j \pi_N(D_{\alpha} \xi_j)
= \sum_{j=1}^{n-1} \tth_j \ton_n e_j + \tth_n \sum_{j=1}^{n-1} e_j
= \sum_{j=1}^{n-1} (\ton_n \tth_j + \tth_n \ton_j) e_j.
$$
And fourth,
$$
\pi_N(p_2^*D_{g,\alpha} \gamma)
= \pi_N \biggl(
\sum_{j=1}^{n-1} \tfo_\alpha \tth_j \, f_j
+ \tth_j p_2^*D_{g,\alpha} f_j \biggr) = 0
$$
at the origin because there the argument to $\pi_N$ takes values in the bundle
spanned by the $f_j$.
The second fundamental form then contributes
\begin{align*}
\<\sigma(\alpha, \gamma), \ov{\sigma(\beta, \delta)}\>
&=
\sum_{j=1}^{n-1} (\ton_n \tth_j + \tth_n \ton_j) (\bar \ttw_n \bar \tfo_j + \bar \tfo_n \bar \ttw_j)
\\
&= \ton_n \bar \ttw_n \tau(\tth, \bar \tfo)
+ \ton_n \bar \tfo_n \tau(\tth, \bar \ttw)
+ \tth_n \bar \ttw_n \tau(\ton, \bar \tfo)
+ \tth_n \bar \tfo_n \tau(\ton, \bar \ttw)
\end{align*}
to the curvature. Putting all of this together we get the result.
\end{proof}

A fairly natural strategy to proving that the blowup of a point of a manifold
that has a metric with positive \hsc{} is also positively curved is to
show that $h_t$ has positive \hsc{} on a neighborhood around the exceptional
divisor for all small enough $t$. Then we could conclude that the blowup has
positive \hsc{} by using Wu's~\cite{wu1973remark} characterization of the
\hsc{} to show positivity outside of that neighborhood for $t$ small enough.
The following corollary shows this strategy cannot work.

\begin{coro}
The holomorphic sectional curvature of $h_t$ is negative in
some tangent directions at the origin for all small enough $t$.
\end{coro}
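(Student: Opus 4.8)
The plan is to specialize the curvature formula of the Proposition to the case $\alpha = \beta = \gamma = \delta = \xi$, read off the sign of the resulting numerator, and then choose $\xi$ to make it negative. Writing $\xi = \sum_j \ton_j \xi_j$ and setting $\ton = \ttw = \tth = \tfo$ in the Proposition, the two $t$-terms coincide and the four middle terms each reduce to $-|\ton_n|^2 \tau(\ton, \bar\ton)$, so that
\[
R_{h_t}(\xi, \bar\xi, \xi, \bar\xi)
= H_h(e_n) |\ton_n|^4 + 2t\, \tau(\ton, \bar\ton)^2 - 4 |\ton_n|^2 \tau(\ton, \bar\ton).
\]
Since the \hsc{} $H_{h_t}(\xi)$ is this quantity divided by the positive number $|\xi|^4$, it suffices to exhibit one nonzero $\xi$ for which the right-hand side is negative.

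Recall that $\tau(\ton, \bar\ton) = \sum_{j=1}^{n-1}|\ton_j|^2$ is the squared length of the part of $\xi$ tangent to the exceptional divisor, while $|\ton_n|^2$ measures its component in the normal direction $e_n$. Setting $s = \tau(\ton, \bar\ton) \ge 0$ and $u = |\ton_n|^2 \ge 0$, the expression above is $H_h(e_n) u^2 + 2t s^2 - 4us$. A purely tangential direction ($u = 0$) or a purely normal one ($s = 0$) leaves this nonnegative for $t \ge 0$, so any negativity must come from the mixed cross term $-4us$ contributed by the second fundamental form. The cleanest way to exploit it is to fix a direction with both $s > 0$ and $u > 0$ small, which is possible as soon as $n \ge 2$: choose the $\ton_j$ so that $s = 1$ and $u = \eps$ with $\eps$ small enough that $H_h(e_n)\eps < 4$ (any $\eps$ works when $H_h(e_n)\le 0$). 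At $t = 0$ the expression then equals $\eps\bigl(H_h(e_n)\eps - 4\bigr) < 0$.

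Since for this fixed $\xi$ the right-hand side is affine in $t$, continuity gives that it stays strictly negative for all small enough $t > 0$, which proves the corollary. I expect no genuine obstacle here, as the computation is routine once the correct shape of direction is identified; the only conceptual point is that no pure tangent or pure normal direction will do, so one is forced into a genuinely mixed direction. If one prefers an explicit threshold in the case $H_h(e_n) > 0$, optimizing the quadratic in $u$ at fixed $s$ by taking $u = 2s/H_h(e_n)$ collapses the expression to $\bigl(2t - 4/H_h(e_n)\bigr)s^2$, which is negative for every $0 < t < 2/H_h(e_n)$.
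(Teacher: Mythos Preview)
Your proof is correct and follows essentially the same route as the paper: both compute the numerator $H_h(e_n)\,|\ton_n|^4 + 2t\,\tau(\ton,\bar\ton)^2 - 4\,|\ton_n|^2\,\tau(\ton,\bar\ton)$ and exploit the negative cross term coming from the second fundamental form on a mixed direction. The paper normalizes $|\ton|=1$, sets $x = |\ton_n|^2$, and locates the minimum of the resulting quadratic $p_t(x)$ explicitly, whereas you fix a single direction with $s=1$, $u=\eps$ and let $t \to 0$; your version sidesteps the case split on the sign of $H_h(e_n)$ and the critical-point computation, at the cost of not producing the paper's explicit expression for the minimum.
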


\begin{proof}
Let $\xi = \sum_j \ton_j \xi_j$ be a holomorphic tangent field.
Note that $|\xi_j|_{h_t}^2 = t$ for $j = 1, \ldots, n-1$ and $|\xi_n|_{h_t}^2 = 1$
at the origin.
We have
$$
H_{h_t}(\xi)
= \frac{|\ton_n|^4 H_h(e_n)
+ 2t \tau(\ton, \bar \ton)^2
- 4 |\ton_n|^2 \tau(\ton, \bar \ton)}{(t \tau(\ton, \bar \ton) + |\ton_n|^2)^2}.
$$
First suppose that $H_h(e_n) < 0$. Picking $\ton$ with $|\ton_n| = 1$ we have
$\tau(\ton, \bar \ton) = 0$ so $H_{h_t}(\xi) = H_h(e_n) < 0$.

Then suppose that $H_h(e_n) \geq 0$.
We may assume that $|\ton| = 1$ when checking for positivity and focus only on
the numerator. We're then left considering the positivity of the polynomial
\begin{align*}
p_t(x)
&= H_h(e_n) x^2 + 2t(1-x)^2 - 4x(1-x)
\\
&= (H_h(e_n) + 2t + 4) x^2 - 4(1 - t) x + 2t
\end{align*}
for $0 \leq x \leq 1$.
Its critical point is
$$
x_t = \frac{2(1-t)}{H_h(e_n) + 2t + 4}
$$
where its value is
$$
p_t(x_t) = -\frac{4(1-t)^2}{H_h(e_n) + 2t + 4} + 2t,
$$
which is negative for all small $t$.
\end{proof}

This corollary shows that \emph{no} metric of the form $\mu^* h + tb$ can have
positive \hsc{} when $h$ does, so we can't prove that the blowup carries such
a metric in this way.
Note however that the blowup of the projective plane at a single point
is a Hirzebruch surface, thus a projective bundle, and thus carries
a metric of positive \hsc{}.

\begin{coro}
Let
$\alpha = \sum_j \ton_j \xi_j$,
$\beta = \sum_k \ttw_k \xi_k$
be holomorphic tangent fields close to $(0,0)$.
The Ricci tensor of $h_t$ is
$$
r_t(\alpha, \ov\beta)
= H_h(e_n) \ton_n \bar \ttw_n
+ (n-1) \tau(\ton, \bar \ttw)
- (n-1) \ton_n \bar \ttw_n / t
$$
at the origin.
\end{coro}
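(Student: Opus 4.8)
The plan is to get the Ricci tensor as the trace of the curvature tensor from the previous proposition over its last two arguments, contracted against $h_t$. Concretely, if $\gamma_1, \ldots, \gamma_n$ is an $h_t$-orthonormal frame at the origin, then
$$
r_t(\alpha, \ov\beta) = \sum_{i=1}^n R_{h_t}(\alpha, \ov\beta, \gamma_i, \ov{\gamma_i}),
$$
so everything reduces to choosing such a frame and summing the closed-form expression we already have.

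First I would produce the orthonormal frame. At the origin the basis fields are $\xi_j = f_j$ for $j < n$ and $\xi_n = e_n$, and since the $e_j$ live in the $V$-factor and the $f_j$ in the $\kk P(V)$-factor of the product metric $p_1^* h \oplus t p_2^* \fs$, the $\xi_j$ are mutually orthogonal there. As recorded in the proof of the previous corollary, $|\xi_j|_{h_t}^2 = t$ for $j = 1, \ldots, n-1$ and $|\xi_n|_{h_t}^2 = 1$. Hence $\gamma_j = \xi_j / \sqrt t$ for $j < n$ together with $\gamma_n = \xi_n$ is the frame I want.

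Next I would feed $\gamma = \delta = \gamma_i$ into the curvature formula and read off the $\tau$-values. Writing $\gamma_i = \sum_l \tth_l^{(i)} \xi_l$, for $i < n$ we have $\tth_n^{(i)} = 0$, $\tau(\tth^{(i)}, \bar\tth^{(i)}) = 1/t$ and $\tau(\ton, \bar\tth^{(i)}) = \ton_i / \sqrt t$, whereas for $i = n$ every $\tau$ involving $\tth^{(n)}$ vanishes because $\gamma_n$ has no component in the first $n-1$ coordinates. Substituting, each $i < n$ contributes $\tau(\ton, \bar\ttw) + \ton_i \bar\ttw_i - \ton_n \bar\ttw_n / t$, while $i = n$ contributes $H_h(e_n) \ton_n \bar\ttw_n - \tau(\ton, \bar\ttw)$.

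The only bookkeeping step is the final summation: using $\sum_{i=1}^{n-1} \ton_i \bar\ttw_i = \tau(\ton, \bar\ttw)$ the $n-1$ terms from $i < n$ collapse to $n\, \tau(\ton, \bar\ttw) - (n-1) \ton_n \bar\ttw_n / t$, and adding the $i = n$ term gives the stated expression. I expect no real obstacle here; the only thing to watch is the $\sqrt t$ normalization, since it is precisely the factor $\tau(\tth^{(i)}, \bar\tth^{(i)}) = 1/t$ that produces the singular term $-(n-1) \ton_n \bar\ttw_n / t$ and explains why the Ricci curvature blows up along the exceptional divisor as $t \to 0$.
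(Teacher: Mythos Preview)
Your proposal is correct and follows essentially the same approach as the paper: both build the same $h_t$-orthonormal frame $\gamma_j = \xi_j/\sqrt t$ for $j<n$, $\gamma_n=\xi_n$, and then trace the curvature formula from the proposition. The only difference is organizational---you compute $R_{h_t}(\alpha,\ov\beta,\gamma_i,\ov{\gamma_i})$ for each $i$ and then sum over $i$, while the paper sums over $i$ separately within each of the three blocks of the curvature formula---but the computations are line-for-line equivalent.
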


\begin{proof}
The tangent fields that correspond to
$v_j = (\delta_{1j}/\sqrt t, \ldots, \delta_{n-1,j}/\sqrt t, \delta_{nj})$ are
orthonormal at the origin.
If $\xi$ and $\eta$ are the fields corresponding to $(\ton_j)$ and $(\ttw_j)$ we
first have
$$
\sum_{j=1}^n H_h(e_n) \ton_n \bar \ttw_n \delta_{jn} = H_h(e_n) \ton_n \bar \ttw_n,
$$
and then
$$
\sum_{j=1}^n
\tau(\ton, \bar \ttw) \tau(v_j, \bar v_j) + \tau(\ton, \bar v_j) \tau(v_j, \bar \ttw)
= n \tau(\ton, \bar \ttw) / t.
$$
The second fundamental form contributes
$$
\displaylines{
\sum_{j=1}^n
\ton_n \bar \ttw_n \tau(v_j, \bar v_j)
+ \ton_n \delta_{jn} \tau(v_j, \bar \ttw)
+ \delta_{jn} \bar \ttw_n \tau(\ton, \bar v_j)
+ \delta_{jn}^2 \tau(\ton, \bar \ttw)
\hfill\cr\noalign{\vskip-10pt}\hfill{}
= (n-1) \ton_n \bar \ttw_n / t + \tau(\ton, \bar \ttw)
}
$$
because $\tau(\ton, \bar v_n) = 0$.
Together we get
\[
r_t(\ton, \bar \ttw)
= H_h(e_n) \ton_n \bar \ttw_n
+ (n-1) \tau(\ton, \bar \ttw)
- (n-1) \ton_n \bar \ttw_n / t.
\qedhere
\]
\end{proof}

We see that the Ricci tensor can also be negative in some directions for $t$
large enough, but is positive in other directions for all $t$.

\begin{coro}
The scalar curvature of $h_t$ is
$$
s_t
= H_h(e_n)
+ (n-1)(n-2) / t
$$
at the origin.
\end{coro}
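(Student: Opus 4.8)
The plan is to obtain the scalar curvature as the metric trace of the Ricci tensor, reusing the orthonormal frame already constructed in the proof of the previous corollary. Since the scalar curvature is $s_t = \sum_{j=1}^n r_t(v_j, \bar v_j)$ for any orthonormal basis $(v_j)$ at the origin, and we already have a closed formula for $r_t$, no new curvature computation is required: the whole proof reduces to a single trace, with the metric inversion handled automatically by working in an orthonormal frame.

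First I would recall the frame $v_j = (\delta_{1j}/\sqrt t, \ldots, \delta_{n-1,j}/\sqrt t, \delta_{nj})$, which is orthonormal for $h_t$ at $(0,0)$. The key observation is that these vectors split cleanly for the two pieces of the Ricci formula: for $j = 1, \ldots, n-1$ the $n$-th coordinate $(v_j)_n$ vanishes while $\tau(v_j, \bar v_j) = 1/t$, whereas for $j = n$ we have $(v_n)_n = 1$ and $\tau(v_n, \bar v_n) = 0$, since $\tau$ is the inner product on the first $n-1$ coordinates.

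Next I would substitute these into
\[
r_t(\ton, \bar \ttw)
= H_h(e_n) \ton_n \bar \ttw_n
+ (n-1) \tau(\ton, \bar \ttw)
- (n-1) \ton_n \bar \ttw_n / t .
\]
For each $j = 1, \ldots, n-1$ only the middle term survives, contributing $(n-1)/t$, so these account for $(n-1)^2/t$ in total. The term $j = n$ contributes $H_h(e_n) - (n-1)/t$. Adding everything gives
\[
s_t = \frac{(n-1)^2}{t} + H_h(e_n) - \frac{n-1}{t}
= H_h(e_n) + \frac{(n-1)(n-2)}{t},
\]
which is the claimed formula.

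I do not expect any genuine obstacle here, as the statement is a direct corollary of the Ricci computation. The only point demanding care is the bookkeeping of the $1/t$ factors and the clean separation of the frame into the $n-1$ vectors with vanishing $n$-th component and the single vector $v_n$ lying along $e_n$; getting the combinatorial count $(n-1)^2 - (n-1) = (n-1)(n-2)$ right is the crux.
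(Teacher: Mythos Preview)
Your proposal is correct. The paper states this corollary without proof, treating it as an immediate consequence of the Ricci formula; your argument---tracing $r_t$ over the same orthonormal frame $(v_j)$ used in the Ricci computation---is exactly the intended one, and your bookkeeping $(n-1)^2/t - (n-1)/t = (n-1)(n-2)/t$ is right.
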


This is the same as Hitchin gets for the scalar curvature if we take his
expressions for $R_0$ and $R_1$ in \cite[Lemmas~5.15 and
5.16]{hitchin1975curvature} and evaluate them at $r = 0$, which is
a nice sanity check.
With more work I think we could recover Hitchin's full results.
We've made some attempts in that direction to see if we could extend his
result on positive scalar curvature to Hermitian metrics, but seem to run
into obstructions due to the nonvanishing of the connection at the origin.

\bibliographystyle{plainurl}
\bibliography{main}

\end{document}